\numberwithin{equation}{section}
\def\?{(?)\marginpar{|?}}
\newtheorem{theorem}{Theorem}[section]
\newtheorem{lemma}[theorem]{Lemma}
\newtheorem{corollary}[theorem]{Corollary}
\def\beq{\begin{equation}}
\def\eeq{\end{equation}}
\def\be{\begin{equation*}}
\def\ee{\end{equation*}}
\begin{document}

\title{ON OPTIMAL REPRESENTATIVES OF FINITE COLOURED LINEAR ORDERS}

\author{F. Mwesigye and J.K. Truss}
  \address{Department of Pure Mathematics,
          University of Leeds,
          Leeds LS2 9JT, UK}
  \email{pmtjkt@leeds.ac.uk, feresiano@yahoo.com}

\keywords{coloured linear order, Ehrenfeucht-Fra\"iss\'e game, optimality, classification\\
Supported by a London Mathematical Society scheme 5 grant}

\subjclass[2010]{06A05, 03C64}

\begin{abstract}
Two structures $A$ and $B$ are $n$-equivalent if player II has a winning strategy in the $n$-move Ehrenfeucht-Fra\"iss\'e game 
on $A$ and $B$. We extend earlier results about $n$-equivalence classes for finite coloured linear orders, describing an 
algorithm for reducing to canonical form under 2-equivalence, and concentrating on the cases of 2 and 3 moves.
\end{abstract}

\maketitle


\section{\textbf{Introduction}}\label{intro}

In \cite{mwesigye2} we studied the equivalence of finite coloured linear orders up to level $n$ in an Ehrenfeucht-Fra\"iss\'e game, written as $\equiv_n$, which means that player II has a winning strategy 
in this game, as well as making some remarks about the infinite case. We gave some bounds for the minimal representatives in the finite case, and the infinite case for up to 2 moves. These results were 
extended in \cite{mwesigye3} to all coloured ordinals, in the monochromatic case giving a precise list of optimal representatives, and in the coloured case giving bounds.

In this paper we return to the finite case, and extend the work of the first paper, by improving the bounds in some instances, and throwing further light on uniqueness of representatives. First we 
briefly recall the required definitions. A {\em coloured linear ordering} is a triple $(A, <, F)$ where $(A, <)$ is a linear order and $F$ is a mapping from $A$ onto a set $C$ of `colours'. We just write 
$A$ instead of $(A, <, F)$ provided that the ordering and colouring are clear. In the $n$-move Ehrenfeucht-Fra\"iss\'e game on coloured linear orders $A$ and $B$ players I and II play alternately, I 
moving first. On each move I picks an element of either structure, and II responds by choosing an element of the other structure. After $n$ moves, I and II between them have chosen elements 
$a_1, a_2, \ldots, a_n$ of $A$, and $b_1, b_2, \ldots, b_n$ of $B$, and player II {\em wins} if the map taking $a_i$ to $b_i$ for each $i$ is an order and colour-isomorphism, and player I wins 
otherwise. We say that $A$ and $B$ are {\em $n$-equivalent} and write $A \equiv_n B$, if II has a winning strategy. Then $\equiv_n$ is an equivalence relation having just finitely many $n$-equivalence 
classes. An {\em optimal representative} is a member of an equivalence class of least possible length. 

In \cite{mwesigye2} we gave upper bounds for the lengths of optimal representatives of $\equiv_n$-classes of $m$-coloured finite linear orders. Only for the case $n = 2$ were these bounds exact. We return 
to this case, describing explicitly the classification of finite $m$-coloured linear orders up to $\equiv_2$-equivalence (based on the idea of a `$T$-configuration' introduced in \cite{mwesigye2}). From 
this we are able to read off which equivalence classes are finite or infinite, and provide an algorithm for determining an optimal representative corresponding to any given finite $m$-coloured linear order. 
We also show that a finite coloured linear order is $\equiv_2$-optimal if and only if each 1-character (see below for the definition) appears at most once.

The problem for more than 2 moves seems to be quite hard, so we concentrate on the case of 3 moves. The idea is that using a key inductive lemma from \cite{mwesigye2}, we need to understand better how the 
2-characters behave, and that is the reason for re-examining the case $n = 2$ in more detail. 

Next we recall the notion of `character' from \cite{mwesigye2}, and the main result about characters. Assume that we have found representatives for the $n$-equivalence classes of certain $m$-coloured 
linearly ordered sets. We write the representative for $A$ as $[A]_n$. In a coloured linear order $A$, the \textit{$n$-character} of $a \in A$ having colour $c$ is the ordered pair 
$\langle [A^{<a}]_n, [A^{>a}]_n \rangle$ (where $A^{< a} = \{x \in A: x < a\}$ and $A^{> a} = \{x \in A: x > a\}$). We let $\rho_n^c(A) = \{\langle[A^{<a}]_n,  [A^{>a}]_n \rangle : a \in A$ is 
$c$-coloured$\}$. Here we shall always include the colour as part of the $n$-character of $a$, in which case we write it as $\langle [A^{<a}]_n, [A^{>a}]_n \rangle_c$ (formally this would be an ordered 
triple).

\begin{theorem} [\cite{mwesigye2}] $A \equiv_{n+1}B$ if and only if $\rho_n^c(A) = \rho_n^c(B)$ for all $c \in C$. \label{1.1}
\end{theorem}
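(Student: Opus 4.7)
The plan is to prove both implications by analysing what happens after player~I's first move, which naturally splits each structure into a "left" part and a "right" part. The whole argument is really an unpacking of the recursive nature of the EF game, together with the fact that moves in disjoint "halves" of a linear order cannot interact in the game.

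For the forward direction, I would assume that player~II has a winning strategy $\sigma$ in the $(n{+}1)$-move game on $A,B$, and take an arbitrary $c$-coloured $a\in A$. Have player~I play $a$ on move~$1$; since $\sigma$ is winning, II's response $b=\sigma(a)$ must be $c$-coloured, and $a,b$ are otherwise unconstrained except through their induced split. I then claim that $\sigma$, restricted to subsequent plays confined to $A^{<a}$ and $B^{<b}$, is a winning II-strategy in the $n$-move game on $A^{<a}$ versus $B^{<b}$: indeed, player~I can choose to play only in these sets, and any violation of order/colour isomorphism at the end of the $n$-move game would also violate it in the $(n{+}1)$-move game (elements on opposite sides of $a$ and $b$ automatically match correctly under $\sigma$'s winning property). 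The same argument on the right gives $A^{>a}\equiv_n B^{>b}$, so $\langle[A^{<a}]_n,[A^{>a}]_n\rangle_c\in\rho_n^c(B)$. Symmetry (having I play first in $B$) yields the reverse containment, hence $\rho_n^c(A)=\rho_n^c(B)$ for every $c$.

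For the converse, assume $\rho_n^c(A)=\rho_n^c(B)$ for all $c$, and describe a winning $(n{+}1)$-strategy for~II. Suppose I's first move is $a\in A$ of colour $c$ (the case $b\in B$ is symmetric). By hypothesis, $\langle[A^{<a}]_n,[A^{>a}]_n\rangle_c\in\rho_n^c(B)$, so there is a $c$-coloured $b\in B$ with $A^{<a}\equiv_n B^{<b}$ and $A^{>a}\equiv_n B^{>b}$; let II play this~$b$. Fix winning II-strategies $\sigma_<$ and $\sigma_>$ for the $n$-move games on the two pairs of halves. During the remaining $n$ moves, whenever player~I plays in $A^{<a}\cup B^{<b}$, player~II responds using $\sigma_<$; whenever I plays in $A^{>a}\cup B^{>b}$, II uses $\sigma_>$. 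Because any such response stays on the correct side of $a$ (respectively $b$), the final map automatically respects the comparisons between the two halves and with the first-move pair $(a,b)$; within each half the map is an order- and colour-isomorphism by the choice of $\sigma_<$, $\sigma_>$. Hence II wins.

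The only real subtlety is the "independence" of play on the two halves in the backward direction: the combined strategy $(\sigma_<,\sigma_>)$ works because order- and colour-correctness across the split $a|b$ is automatic, so one does not need a genuine composition/Feferman--Vaught argument but merely that the two sub-games run in parallel without interference. This is where I would be most careful in writing out the details, though no deep idea is required beyond the observation that a move landing in one half does not constrain player~II's response in the other half.
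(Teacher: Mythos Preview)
The paper does not actually prove this theorem; it is quoted from \cite{mwesigye2} as a known result and used as a tool throughout. Your argument is the standard one and is correct: the forward direction uses that a winning strategy after the forced first-move pair $(a,\sigma(a))$ must keep subsequent responses on the same side of the split (otherwise order-preservation already fails at the end of the play), so it restricts to winning strategies on the two halves; the backward direction glues the two half-strategies, which is legitimate precisely because comparisons across the cut $a\mid b$ are automatic in a linear order. The only cosmetic omission is the trivial case where player~I replays $a$ or $b$ in a later round, where II simply replays $b$ or $a$; otherwise nothing is missing.
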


We need the following `Cutting lemma' from \cite{mwesigye2}. 

\begin{lemma} \label{1.2} Let $A$ be a finite $m$-coloured linear order and let $a$ and $b$ be elements of $A$ such that $a < b$ satisfying the following conditions:
\begin{itemize}
\item[(i)] $a$ and $b$ determine the same $n$-character,
\item[(ii)] for every $x \in A$ with $a < x \le b$,  there is $y \le a$ having the same $n$-character as $x$.
\end{itemize}
Then $A$ is $(n+1)$-equivalent to $B = A \setminus (a, b]$.
\end{lemma}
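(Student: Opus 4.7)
The plan is to apply Theorem~\ref{1.1}, reducing the desired $A \equiv_{n+1} B$ to the equality $\rho_n^c(A) = \rho_n^c(B)$ for every colour $c$, and to proceed by induction on $n$. The base case $n = 0$ is immediate: a $0$-character is just a colour, so (i) and (ii) together state that $A$ and $B := A \setminus (a,b]$ share the same set of colours, which gives $A \equiv_1 B$.

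For the inductive step the substantive task is the following key claim: for every $z \in B$, the $n$-character of $z$ computed in $B$ coincides with the $n$-character of $z$ computed in $A$. Given this claim, $\rho_n^c(B) \subseteq \rho_n^c(A)$ is immediate, and the reverse inclusion handles any $c$-coloured $x \in (a,b]$ by taking the witness $y \le a$ from (ii): $y$ lies in $B$ and realises in $A$ the same $n$-character as $x$, and by the claim this character is also realised by $y$ inside $B$. To prove the claim itself, the two sides of $z$ being symmetric, I take $z \le a$: then $A^{<z} = B^{<z}$ trivially, and the remaining task is $A^{>z} \equiv_n B^{>z}$, where $B^{>z} = A^{>z} \setminus (a,b]$. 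This is a fresh instance of the cutting lemma, now inside the linear order $A^{>z}$, so by the inductive hypothesis it suffices to verify that $a$ and $b$ satisfy analogues of (i) and (ii) at level $n-1$ relative to $A^{>z}$.

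The main obstacle is precisely this propagation of the hypotheses under restriction to $A^{>z}$. The right-hand components $A^{>a}$ and $A^{>b}$ lie wholly inside $A^{>z}$ and are therefore unchanged, so the right halves of the $(n-1)$-characters of $a$ and $b$ inside $A^{>z}$ still agree; the delicate point is that their left halves shrink from $A^{<a}, A^{<b}$ to $(z,a), (z,b)$. What must be established is $(z,a) \equiv_{n-1} (z,b)$, together with the restricted form of (ii) --- namely, a witness in $(z,a]$ for each $x \in (a,b]$. I would extract these from the hypothesis $A^{<a} \equiv_n A^{<b}$ (the left component of (i)) by playing $z$ as an anchor in the $n$-move EF game and exploiting $z \le a$, so that $A^{<z}$ sits identically inside both $A^{<a}$ and $A^{<b}$; combined with (ii) applied at the full level to relocate the original witnesses into the correct sub-interval, this supplies the needed $(n-1)$-level data. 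The symmetric case $z > b$ is handled identically with left and right reversed, closing the induction.
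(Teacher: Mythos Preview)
The paper does not actually prove Lemma~\ref{1.2}; it is quoted from \cite{mwesigye2} without argument. So there is no in-paper proof to compare against, but your attempt has a real gap that is worth naming.

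Your overall plan via Theorem~\ref{1.1} is correct, and the key claim---that every $z \in B$ has the same $n$-character in $B$ as in $A$---is the right target. The trouble is the inductive mechanism you propose for proving it. For $z \le a$ you need $(z,a) \equiv_{n-1} (z,b)$ and you suggest extracting this from $A^{<a} \equiv_n A^{<b}$ by ``playing $z$ as an anchor''. But in the $n$-move game on $A^{<a}$ versus $A^{<b}$, when player I plays $z$, player II's winning response is some $z'$ with $(z,a) \equiv_{n-1} (z',b)$; nothing forces $z' = z$, so you do not get $(z,a) \equiv_{n-1} (z,b)$. The restricted version of (ii) fails for the same reason: the witness $y \le a$ supplied by the original hypothesis may perfectly well satisfy $y \le z$, so it need not lie in $(z,a]$, and ``relocating'' it into that interval is precisely the step you have not supplied. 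The induction on $n$ therefore does not close.

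The standard fix avoids induction altogether and uses the fact that $\equiv_n$ is a congruence for concatenation of coloured linear orders. From (i) one has $A^{>a} \equiv_n A^{>b}$; prefixing both sides by the segment $(z,a]$ yields
\[
A^{>z} = (z,a] \,A^{>a} \;\equiv_n\; (z,a]\, A^{>b} = B^{>z},
\]
which is exactly what you need for $z \le a$. For $z > b$, the companion fact $A^{<a} \equiv_n A^{<b}$ together with $F(a) = F(b)$ gives $A^{\le a} \equiv_n A^{\le b}$, and appending $(b,z)$ on the right gives $B^{<z} \equiv_n A^{<z}$. With the key claim thus established directly, hypothesis (ii) is used only once, at the very end, exactly as you describe.
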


\vspace{.1in}

Note that this may be applied in a trivial case, namely, that no two consecutive points of an $\equiv_{n+1}$-optimal finite string can have equal $n$-characters.

\vspace{.1in}

It is clear from Theorem \ref{1.1} that if in an $m$-coloured linear order, no two points have the same $n$-character, then the ordering is $\equiv_{n+1}$-optimal, meaning that it is not $(n+1)$-equivalent 
to any shorter ordering. Based on this, we present a construction of a finite 2-coloured linear order of length 70 in which all points have distinct 2-characters, and which is therefore $\equiv_3$-optimal,
and show that 70 is the greatest possible number in which all 2-characters are distinct. We also construct a finite coloured $\equiv_3$-optimal linear order of length 74, in which 2-characters must 
therefore repeat. It should possible to find longer examples, but the details would be quite tedious, so giving one of this length is good enough to illustrate the idea. This casts some light on 
the hypothesis required for the `cutting lemma' (that is, what it says isn't that we can reduce the length just based on repetition of characters---more is required about what happens in between). 

The typical case we have in mind is that when searching for optimal representatives, we start with a possibly long coloured order, and successively reduce it by removing pieces, retaining $n$-equivalence, 
till it becomes optimal. The proof of \cite{mwesigye2} is too indirect to guarantee immediately that the final ordering is a subordering of the one we start with. We therefore extend the material of 
\cite{mwesigye2} by showing that for 2-equivalence at any rate, we {\em can} guarantee that the optimal representative is contained in the original one; we present an algorithm for achieving this. We 
believe that this is false for $n = 3$, and in section 3 explain why.

With regard to the general case, but particularly applied to $n = 3$, we use directed graphs to help analyze $n$-equivalence. One method would be to take $(n-1)$-characters themselves as vertices of the 
directed graph, with an arrow going from $\langle X_1, Y_1 \rangle_{c_1}$ to $\langle X_2, Y_2 \rangle_{c_2}$ if for some representatives $x_1, x_2, y_1, y_2$ of $X_1, Y_1, X_2, Y_2$, 
$x_1c_1 \equiv_{n-1} x_2$ and $c_2y_2 \equiv_{n-1} y_1$, where these are the strings obtained from $x_1$, $y_2$ by adding a $c_1$-coloured point on the right, a $c_2$-coloured point on the left respectively. 
The idea is that in scanning a (long) word from left to right, at each point we can view its $(n-1)$-character to left and right, and see how this varies. In practice in what we present here for $m = 2$, 
$n = 3$, we focus just on the `middle' section of the given string, in which case a simplified directed graph gives all the information we require. 

\section{Classification of 2-equivalence classes}

In this section we give a lot more detail about the 2-equivalence classes of finite coloured linear orders. In \cite{mwesigye2} we established the precise value ($m^2 + 2m$) of the least upper bound of the 
lengths of the optimal representatives of $\equiv_2$-classes. Here we are able, using similar ideas, to give an explicit list of all the $\equiv_2$-classes, from which we can read off, for instance, the 
length of the optimal representative of each class, and also note which classes are finite or infinite. The key idea here is to use the notion of `$T$-configuration' which was introduced in \cite{mwesigye2}. 

We fix $m$ as the number of colours. A {\em $T$-configuration} is then defined to be a linear order of the form $T = \{x_i: 1 \le i \le m\} \cup \{y_i: 1 \le i \le m\}$ in which $x_1 < x_2 < \ldots < x_m$ 
and $y_1 > y_2 > \ldots > y_m$, and $x_1$ and $y_1$ are the least and greatest members of $T$ respectively. Here all the $x_i$ are therefore distinct, and so are the $y_i$, but it is not ruled out that 
$x_i = y_j$ for certain $i$ and $j$. Each $T$-configuration therefore has size between $m$ and $2m$. If $(A, \le, F)$ is a finite coloured linear order having $m$ colours, then there is an associated 
$T$-configuration, which is the linear order induced on $\{x_i: 1 \le i \le m\} \cup \{y_i: 1 \le i \le m\}$ where for each $i$, $x_i$ is the least point $x$ of $A$ such that $\{F(z): z \in A, z \le x\}$ 
has $i$ elements, and $y_i$ is the greatest point $y$ of $A$ such that $\{F(z): z \in A, z \ge y\}$ has $i$ elements. Under these circumstances, the $T$-configuration becomes coloured. However, the same 
$T$-configuration may be coloured in several different ways. We remark that not all $T$-configurations are associated with a coloured linear order. The following lemma explains when this happens.

\begin{lemma} The $T$-configuration $T = \{x_i: 1 \le i \le m\} \cup \{y_i: 1 \le i \le m\}$ is associated with some $m$-coloured linear order if and only if $i + j \le m+1 \Rightarrow x_i \le y_j$.   
\label{2.1}   \end{lemma}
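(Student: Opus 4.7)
This is a biconditional, and I would handle the two directions separately. For \emph{necessity} ($\Rightarrow$), I would argue by contradiction. Suppose $T$ is the $T$-configuration of some $m$-coloured $A$ but $y_j < x_i$ for some $i,j$ with $i+j \le m+1$. Because $y_j < x_i$, every element of $A$ lies in $A^{\le x_i} \cup A^{\ge y_j}$, so writing $L_i$ for the set of colours appearing on $A^{\le x_i}$ and $R_j$ for those on $A^{\ge y_j}$, we get $L_i \cup R_j = C$ and hence $|L_i \cap R_j| = i+j-m$ by inclusion--exclusion. The crux is that $\{F(x_i), F(y_j)\} \subseteq L_i \cap R_j$ and these two colours are distinct: both memberships are immediate from $y_j < x_i$, while $F(x_i) \ne F(y_j)$ because $F(x_i)$ is by definition new at $x_i$ and hence unseen strictly to the left. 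This forces $|L_i \cap R_j| \ge 2$ and so $i+j \ge m+2$, a contradiction.

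For \emph{sufficiency} ($\Leftarrow$), I would realise $T$ by colouring $T$ itself. Fix $m$ colours $c_1,\ldots,c_m$ and let the colouring be determined by a bijection $\pi$ of $\{1,\ldots,m\}$ via $F(x_i) = c_i = F(y_{\pi(i)})$. For this to be well-defined at identified points we need $\pi(i)=j$ whenever $x_i = y_j$, and for $x_i$ to actually be the leftmost $c_i$ in $T$ (and $y_{\pi(i)}$ the rightmost) we need $x_i \le y_{\pi(i)}$ for every $i$. Once such a $\pi$ is in hand, a direct check confirms that scanning $T$ left-to-right introduces a new colour exactly at each $x_i$: any $y_j$ strictly between $x_k$ and $x_{k+1}$ satisfies $y_j < x_{k+1}$, and the matching condition $x_{\pi^{-1}(j)} \le y_j$ then forces $\pi^{-1}(j) \le k$, so the colour $c_{\pi^{-1}(j)}$ at $y_j$ has already been seen; symmetric reasoning handles the right-to-left scan. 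Hence the coloured $T$ has associated $T$-configuration equal to $T$.

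The main obstacle is therefore producing such a $\pi$. I would recast this as a bipartite matching problem with left and right vertex sets $\{1,\ldots,m\}$, edges $(i,j)$ iff $x_i \le y_j$, and forced edges $\{(i,j) : x_i = y_j\}$ which must lie in the matching. The monotonicity $x_1 < \cdots < x_m$ and $y_1 > \cdots > y_m$ makes each neighbourhood $\{j : x_i \le y_j\}$ an initial segment $\{1,\ldots,k(i)\}$ with $k$ non-increasing, and the hypothesis amounts exactly to $k(i) \ge m-i+1$ for all $i$. Applying Hall's theorem to the bipartite graph obtained by deleting the forced endpoints, I would reduce to checking Hall's condition for sets of the form $I' \cap [i_0, m]$ in the remaining left vertex set $I'$. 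The key observation is that any identified pair $(i_a, j_a)$ with $j_a \le k(i_0)$ satisfies $x_{i_a} = y_{j_a} \ge y_{k(i_0)} \ge x_{i_0}$ and hence $i_a \ge i_0$; combined with $k(i_0) \ge m-i_0+1$ this yields the required Hall inequality and produces $\pi$.
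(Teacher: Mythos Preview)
Your proposal is correct. The necessity argument is essentially a repackaging of the paper's: you count via inclusion--exclusion on the colour sets $L_i$ and $R_j$, while the paper counts the distinctly coloured points $y_l$ with $l>k$ that fall into $(-\infty,x_i)$; both extract the same numerical inequality $i+j\ge m+2$ (equivalently $m-k\le i-1$) from the assumption $y_j<x_i$.

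For sufficiency, however, you take a genuinely different route. The paper colours $T$ greedily: fix distinct colours on the $x_i$, then run through $y_m,y_{m-1},\ldots,y_1$ and, for those $y_j$ lying in $(x_i,x_{i+1})$, assign colours from $\{F(x_1),\ldots,F(x_i)\}$ not yet used on earlier $y$'s; the hypothesis is invoked to show that at most $i$ of the $y_j$ lie below $x_{i+1}$, so the supply of colours never runs out. Your approach instead encodes the requirement $x_i\le y_{\pi(i)}$ as a bipartite matching problem and produces the bijection $\pi$ in one stroke via Hall's theorem, using the observation that the neighbourhoods $\{j:x_i\le y_j\}=[1,k(i)]$ are nested so that Hall's condition need only be verified on sets $I'\cap[i_0,m]$. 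The counting you do (that forced pairs $(i_a,j_a)$ with $j_a\le k(i_0)$ are exactly those with $i_a\ge i_0$) is correct and reduces the Hall inequality precisely to $k(i_0)\ge m-i_0+1$, which is the hypothesis. The paper's greedy argument is more elementary and explicitly constructive; your Hall-based argument is cleaner conceptually and makes transparent why the condition is exactly what is needed for a system of distinct representatives.
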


\begin{proof} First to check the necessity of the given condition, suppose that $\{x_i: 1 \le i \le m\} \cup \{y_i: 1 \le i \le m\}$ is the $T$-configuration arising from the coloured linear order
$(A, \le, F)$, and let $i + j \le m+1$. Let $k$ be greatest such that $x_i \le y_k$. Then $\{y_l: l > k\}$ are distinctly coloured points lying in $(-\infty, x_i)$, which exhibits $i-1$ colours. Hence
$m-k \le i-1$, so $m+1 \le i+k$. We deduce that $i+j \le i+k$ and hence $j \le k$, so that $x_i \le y_j$.

Conversely, assuming the given condition holds, let the $T$-configuration $T = \{x_i: 1 \le i \le m\} \cup \{y_i: 1 \le i \le m\}$ be given, and we have to find a coloured linear order $(A, \le, F)$ such 
that $T$ is the associated $T$-configuration. We take $A = T$, and have to show how the points can be coloured so that $x_i$ is the least point $x$ of $A$ such that $\{F(z): z \in A, z \le x\}$ has $i$ 
elements, and $y_i$ is the greatest point $y$ of $A$ such that $\{F(z): z \in A, z \ge y\}$ has $i$ elements. Let us start by colouring the $x_i$ by distinct colours. Clearly this ensures that $x_i$ is the 
least point such that $\{F(x_k): x_k \in A, x_k \le x_i\}$ has $i$ elements. We have to colour the $y_j$ so that no member of $\{F(z): z \in A, z \le x_i\}$ has a `new' colour. We assign colours 
successively to $y_m$, $y_{m-1}$, $\ldots$, $y_1$ according to which of the sets $\{x_1\}$, $(x_1, x_2)$, $\{x_2\}$, $(x_2, x_3)$, $\ldots$, $\{x_m\}$, $(x_m, \infty)$ they lie in. Given $i$, let $j$ be the 
least such that $y_j < x_{i+1}$ (if any). Then by hypothesis, $i + 1 + j \not \le m+1$, so $j > m-i$. Hence there are at most $i$ values of $j$ such that $y_j < x_{i+1}$.

The colouring is now given as follows. If $y_j = x_i$ then we let $F(y_j) = F(x_i)$. Otherwise consider colouring all the $y_j$s which lie in $(x_i, x_{i+1})$. By the remark just made, there are at most $i$ 
values of $j$ such that $y_j < x_{i+1}$, and there are $i$ colours available for $\{y_j: y_j < x_{i+1}\}$. We have so far used $|\{j: y_j \le x_i\}|$ of these colours, so the number remaining is 
$i - |\{j: y_j \le x_i\}| \ge |\{j: y_j < x_{i+1}\}| - |\{j: y_j \le x_i\}|  = |\{j: x_i < y_j < x_{i+1}\}|$, and these points are coloured in any way using the available colours. 

The construction has explicitly ensured that for each $i$, $x_i$ is the least point such that $(-\infty, x_i]$ is coloured by $i$ colours. To verify the corresponding condition for $y_i$, note that
there are certainly exactly $i$ values of $k \le i$ such that $y_k \ge y_i$, and these points are all coloured by distinct colours. Suppose that $x_j \ge y_i$. Then as there are $m$ colours, and all $y_k$ 
points are distinctly coloured, there is $k$ such that $F(x_j) = F(y_k)$. If $k > i$ then $y_k < y_i \le x_j$, contrary to $x_j$ the {\em least} point coloured $F(x_j)$. We deduce that $k \le i$, and so
$F(x_j) \in \{F(y_k): k \le i\}$ as required. \end{proof}

\vspace{.1in}

To specify a finite coloured order up to 2-equivalence, we need to know in addition what colouring $T$ receives (and then call this a {\em coloured $T$-configuration}, though we do not indicate this 
explicitly in the notation), and which colours arise as the colours of points lying between any two consecutive members of $T$, and we write the set of colours between $u$ and $v$ as $g(u, v)$. We write 
${\mathcal C}_{T,g}$ for the set of all finite coloured orders such that $T$ is the associated coloured $T$-configuration and colours between the points are given by $g$. Note that not all possible sets of 
colours are possible for $g(u, v)$ and they will be constrained by the $x_i$ and $y_j$. If for ease we write $x_{m+1} = \infty$ and $y_{m+1} = -\infty$ (not coloured) then a point with colour $c$ can be 
inserted in $(x_i, x_{i + 1}) \cap (y_{j+1}, y_j)$ if and only if $c \in F(-\infty, x_i] \cap F[y_j, \infty)$.

\begin{theorem} Two finite $C$-coloured linear orders are $2$-equivalent if and only if for some $T$ and $g$, where $T$ is a coloured $T$-configuration, they both lie in ${\mathcal C}_{T,g}$. 
In other words, the ${\mathcal C}_{T,g}$ classify the $\equiv_2$-classes of $m$-coloured finite linear orders.    \label{2.2}   \end{theorem}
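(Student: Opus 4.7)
The plan is to argue the two directions separately using Theorem~\ref{1.1}.

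For $(\Leftarrow)$, suppose $A, B \in \mathcal{C}_{T, g}$. By Theorem~\ref{1.1}, it suffices to show $\rho_1^c(A) = \rho_1^c(B)$ for every $c \in C$. For any $a \in A$, its 1-character $\langle F(A^{<a}), F(A^{>a}) \rangle_{F(a)}$ is determined by where $a$ sits relative to $T$: the functions $a \mapsto F(A^{<a})$ and $a \mapsto F(A^{>a})$ are piecewise constant, changing only at points of $T$, so on each gap they take the constant values $F(A^{\leq u})$ and $F(A^{\geq v})$ where $u, v$ are the flanking $T$-points, while at points of $T$ itself they take values dictated by $T$'s order and colouring. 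The colour $F(a)$ is determined by $T$ (if $a \in T$) or lies in $g(u, v)$ (if $a$ is in the gap $(u, v)$). Hence $\rho_1^c(A)$ is entirely a function of $(T, g)$, and so coincides with $\rho_1^c(B)$.

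For $(\Rightarrow)$, suppose $\rho_1^c(A) = \rho_1^c(B)$ for all $c$. I will show that $(T, g)$ is reconstructible from the family $\{\rho_1^c(A)\}_{c \in C}$, from which it follows that $A$ and $B$ lie in the same class $\mathcal{C}_{T, g}$. First, the left-sets occurring across all 1-characters form a chain under inclusion, $\emptyset = L_0 \subsetneq L_1 \subsetneq \cdots \subsetneq L_{m-1}$ with $L_i = \{F(x_1), \ldots, F(x_i)\}$, since $a \mapsto F(A^{<a})$ is monotone nondecreasing; each $L_i$ for $i \leq m-1$ does occur (as the left-set of $x_{i+1}$). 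Reading off the chain recovers the sequence $F(x_1), \ldots, F(x_m)$, the last entry being $C \setminus L_{m-1}$. A symmetric argument applied to right-sets recovers $F(y_1), \ldots, F(y_m)$.

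Next, identify each $T$-point's 1-character inside the $\rho_1^c$'s: the 1-character of $x_i$ is the unique $\langle L_{i-1}, R \rangle_{F(x_i)}$ in $\rho_1^{F(x_i)}(A)$, because any later $F(x_i)$-coloured point has $F(x_i)$ in its left-set, and no gap point immediately preceding $x_i$ can carry colour $F(x_i) \notin L_{i-1}$ (by the insertion constraint implicit in Lemma~\ref{2.1}, gap colours must lie in $F(A^{\leq u}) \cap F(A^{\geq v})$). A symmetric characterisation identifies each $y_j$'s 1-character. The order of $T$-points is then read off from the equivalence $z_1 \leq z_2 \iff L_{z_1} \subseteq L_{z_2}$ and $R_{z_1} \supseteq R_{z_2}$, while coincidences $x_i = y_j$ correspond exactly to those pairs whose identified 1-characters are equal, reconstructing the coloured $T$-configuration. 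Finally, for each gap $(u, v)$ one computes $L^* = F(A^{\leq u})$ and $R^* = F(A^{\geq v})$ from $T$; then $c \in g(u, v)$ iff $\langle L^*, R^*\rangle_c \in \rho_1^c(A)$ and this 1-character is not already accounted for by a $T$-point identified above.

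The main technical obstacle is the disambiguation in the converse direction: one must verify that in $\rho_1^c$ the 1-characters coming from $T$-points can be separated cleanly from those coming from gap points. The crux is that an $x$-type $T$-point $x_i$ bears a colour $F(x_i)$ which is \emph{not} in its left-set $L_{i-1}$, while every gap point in the gap just before $x_i$ must bear a colour that is already in that left-set; a symmetric observation involving right-sets handles the $y_j$'s. Careful treatment of the coincidence cases $x_i = y_j$ and of the endpoint gaps $(-\infty, x_1)$ and $(y_1, \infty)$ completes the reconstruction, showing that $(T, g)$ is fully determined by $\{\rho_1^c(A)\}_{c \in C}$.
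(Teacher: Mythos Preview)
Your proof is correct and follows essentially the same approach as the paper: both directions go through Theorem~\ref{1.1}, identifying $1$-characters with pairs of colour-sets and showing that the coloured $T$-configuration together with $g$ determines and is determined by the set of realized $1$-characters. The paper phrases the forward direction as a direct comparison of $A$ and $A'$ (matching $x_i$ with $x_i'$, then checking $x_i \le y_j \Leftrightarrow x_i' \le y_j'$ via the number of colours to the right, then matching $g$), whereas you phrase it as a reconstruction of $(T,g)$ from $\{\rho_1^c(A)\}_c$; these are the same argument in different clothing. One small point: your order criterion $z_1 \le z_2 \iff L_{z_1} \subseteq L_{z_2}$ and $R_{z_1} \supseteq R_{z_2}$ does not by itself resolve the edge case where distinct $x_i$ and $y_j$ happen to have identical left- and right-sets but different colours---however, in that situation one checks that necessarily $y_j < x_i$ (since $x_i < y_j$ would force $|L_{y_j}| \ge i > i-1 = |L_{x_i}|$), so the order is still recoverable and your argument goes through.
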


\begin{proof} This relies on Theorem \ref{1.1}, which tells us that $A \equiv_2 A'$ if and only if they exhibit the same 1-characters. Let $\{x_i: 1 \le i \le m\} \cup \{y_i: 1 \le i \le m\}$ and
$\{x'_i: 1 \le i \le m\} \cup \{y'_i: 1 \le i \le m\}$ be the coloured $T$-configurations associated with $A$ and $A'$, and suppose first that $A \equiv_2 A'$. Thus $A$ and $A'$ exhibit the same 
1-characters. Now by definition of $x_i$, $|\{F(z): z < x_i\}| = i - 1$ and $|\{F(z): z \le x_i\}| = i$. Let $x' \in A'$ realize the same 1-character in $A'$ as $x_i$ does in $A$. Then $F'(x') = F(x_i)$, 
$|\{F'(z): z < x'\}| = i - 1$ and $|\{F'(z): z \le x'\}| = i$. This implies that $x' = x_i'$. Similarly for $y_j$ and $y_j'$. Next we have to see that $x_i \le y_j \Leftrightarrow x_i' \le y_j'$, and 
similarly for $<$. Suppose that $x_i \le y_j$ ($x_i < y_j$ respectively). Then $x_i$ has at least $j-1$ colours to the right (at least $j$ respectively), and as it realizes the same character as $x_i'$, this 
is also true of $x_i'$ in $A'$, and so it follows that $x_i' \le y_j'$ ($x_i' < y_j'$ respectively). We deduce that $A$ and $A'$ realize the same coloured $T$-configurations. To see that they realize the 
same functions $g$, let $u < v$ be consecutive members of $\{x_i: 1 \le i \le m\} \cup \{y_i: 1 \le i \le m\}$, and $u' < v'$ the corresponding consecutive members of 
$\{x'_i: 1 \le i \le m\} \cup \{y'_i: 1 \le i \le m\}$. Then the left and right 1-characters of each member of $(u, v)$ are $\{F(z): z \le u\}$ and $\{F(z): z \ge v\}$ respectively, and furthermore, these 
characters are not realized by any other members of $A$. Precisely these same left and right characters are realized in $(u', v')$, and since the only extra ingredient required to specify the character is 
the colour of the point, it follows that exactly same set of colours is realized in $(u, v)$ and $(u', v')$. In other words, $g(u, v) = g'(u', v')$.

Conversely, supposing that $A$ and $A'$ both lie in the same ${\mathcal C}_{T,g}$, we see that they both realize the same 1-characters, so are 2-equivalent.  \end{proof}

\begin{corollary} A $\equiv_2$-class of finite linear coloured orders is finite if and only if it is a singleton, which holds if and only if $g(u,v) = \emptyset$ for each $u, v$.   \label{2.3}
\end{corollary}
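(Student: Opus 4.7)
The plan is to use Theorem \ref{2.2} to translate the statement into a question about the sets $\mathcal{C}_{T,g}$. Every $\equiv_2$-class of finite coloured linear orders equals $\mathcal{C}_{T,g}$ for some coloured $T$-configuration $T$ and some associated colour function $g$, so I only need to show that $|\mathcal{C}_{T,g}| = 1$ if $g$ is identically empty and that $|\mathcal{C}_{T,g}| = \infty$ otherwise.

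For the first direction, suppose $g(u,v) = \emptyset$ for every pair of consecutive members $u < v$ of $T$. If $A \in \mathcal{C}_{T,g}$, then by the definition of $g$, no point of $A$ lies strictly between two consecutive members of the $T$-configuration, i.e.\ the underlying set of $A$ is exactly $T$. Since the colouring of $A$ restricted to $T$ is prescribed to be the coloured $T$-configuration, $A$ is determined up to isomorphism, so $\mathcal{C}_{T,g}$ is a singleton (hence finite).

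For the converse, suppose that $g(u_0,v_0) \neq \emptyset$ for some pair of consecutive members $u_0 < v_0$ of $T$, and pick $c \in g(u_0,v_0)$. Start with any $A \in \mathcal{C}_{T,g}$ and form $A'$ by inserting an additional $c$-coloured point $p$ into the open interval $(u_0, v_0)$. I claim $A' \in \mathcal{C}_{T,g}$ also. The key step, and the only place where care is needed, is to verify that inserting $p$ changes neither the associated coloured $T$-configuration nor the function $g$. By the characterisation preceding the statement of the corollary, $c \in F(-\infty, u_0] \cap F[v_0,\infty)$, so $c$ is already exhibited both to the left of $p$ and to the right of $p$; hence for each $i$, the least point witnessing $i$ colours to the left and the greatest point witnessing $i$ colours to the right are unchanged, so the associated (coloured) $T$-configuration of $A'$ is still $T$. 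Likewise, the set of colours realised in any open interval between consecutive members of $T$ is unchanged (the interval $(u_0,v_0)$ already contained colour $c$, since $c \in g(u_0,v_0)$), so the associated $g$-function is unchanged.

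Iterating this insertion produces coloured linear orders in $\mathcal{C}_{T,g}$ of every length $\geq |A|$, so the class is infinite. Combining the two directions gives the corollary: finiteness is equivalent to being a singleton, which in turn is equivalent to $g$ being everywhere empty. The main obstacle is the verification in the converse direction that a single insertion preserves membership in $\mathcal{C}_{T,g}$, which reduces to the explicit colour-containment criterion for $g$ given just before Theorem \ref{2.2}.
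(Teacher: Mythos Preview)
Your proof is correct and follows the natural line the paper intends (the paper states this corollary without proof, relying on Theorem~\ref{2.2} and the discussion of which colours may appear in $g(u,v)$). Your verification that inserting a $c$-coloured point with $c\in g(u_0,v_0)$ preserves both the coloured $T$-configuration and $g$ is exactly the point one needs to make explicit, and together with the trivial implication singleton $\Rightarrow$ finite, your two directions close the cycle of equivalences.
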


Next we give an algorithm for determining an optimal member of the 2-equivalence class of a finite coloured linear order $A$. It would be possible to do this inductively on the number of colours, and since 
we shall require them later anyway, we define the subsets $L$, $R$, and $M$ of $A$, for `left', `right', and `middle'. A point lies in $L$ if there are fewer than $m$ colours occurring to its left, and is 
in $R$ if there are fewer than $m$ colours occurring to its right. The remainder (if any) is $M$. Thus in the above notation, $L = (-\infty, x_m)$ and $R = (y_m, \infty)$. The induction would be based on 
the fact that each of $L$ and $R$ exhibit only $m-1$ colours. There are some (minor) complications in the case where $L$ and $R$ overlap however, so the following method, based on Theorem \ref{2.2}, is 
preferable.

From $A$ we first evaluate $x_i$ and $y_j$. Then we replace each interval $(u,v)$ by one in which each of its colours only arises once. This leads to the following result.

\begin{theorem} Any finite coloured string has a $2$-equivalent $\equiv_2$-optimal substring.  \label{2.4} \end{theorem}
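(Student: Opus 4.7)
The plan is to build $A'$ directly as a subset of $A$ and show it lies in the same class ${\mathcal C}_{T,g}$ as $A$, while manifestly having the minimum possible length in that class.

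Let $T=\{x_i:1\le i\le m\}\cup\{y_j:1\le j\le m\}$ be the coloured $T$-configuration associated with $A$, and let $g$ record the set of colours of the $A$-points lying strictly between each pair of consecutive members of $T$. I would define $A'\subseteq A$ as $T$ together with, for every consecutive pair $u<v$ in $T$ and every colour $c\in g(u,v)$, one arbitrarily chosen point of $A\cap(u,v)$ of colour $c$. Thus $A'$ is a coloured substring of $A$ of length exactly $|T|+\sum_{u<v}|g(u,v)|$.

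The main verification is that the coloured $T$-configuration computed from $A'$ is still $T$ and its gap colour function is still $g$. For the left-hand witnesses I would argue by induction on $i$ that $x'_i=x_i$. The base case is that $x_1$ is the minimum of $A$, hence also of $A'\supseteq T$. For the inductive step, the set of colours on $A\cap(-\infty,x_{i-1}]$ is precisely $\{F(x_1),\ldots,F(x_{i-1})\}$, and since $\{x_1,\ldots,x_{i-1}\}\subseteq A'\subseteq A$ this set is also the colour set of $A'\cap(-\infty,x_{i-1}]$; no point of $A$ (and hence of $A'$) strictly between $x_{i-1}$ and $x_i$ can carry a new colour, while $x_i\in T\subseteq A'$ does. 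Hence $x'_i=x_i$ with the same colour. The symmetric argument on the right yields $y'_j=y_j$. That the gap function is preserved is immediate, since $A'\cap(u,v)$ contains exactly one point of each colour in $g(u,v)$ and no others. Theorem~\ref{2.2} then gives $A\equiv_2 A'$.

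For optimality, any $A''\equiv_2 A$ lies in ${\mathcal C}_{T,g}$ by Theorem~\ref{2.2}, so its associated coloured $T$-configuration is $T$ (contributing $|T|$ points) and each of its gaps $(u,v)$ must realise every colour in $g(u,v)$ at least once (contributing at least $|g(u,v)|$ further points). Therefore $|A''|\ge |T|+\sum_{u<v}|g(u,v)|=|A'|$, so $A'$ is $\equiv_2$-optimal.

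The only real step to watch is the inductive verification that restricting to $A'$ does not perturb the $T$-configuration, but this is automatic because the extremal witnesses $x_i$ and $y_j$ all lie in $T$ and so survive the restriction. One could instead reach the same conclusion by iterating Lemma~\ref{1.2} to delete duplicate-colour points inside each gap, but the direct construction avoids the bookkeeping required to verify condition (ii) of that lemma for non-adjacent repetitions.
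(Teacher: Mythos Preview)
Your proof is correct and follows precisely the approach the paper sketches: the paper's entire argument for Theorem~\ref{2.4} is the one-line algorithm ``evaluate $x_i$ and $y_j$, then replace each interval $(u,v)$ by one in which each of its colours only arises once,'' and you have carried out exactly this construction together with the verifications (via Theorem~\ref{2.2}) that the paper leaves implicit. Your added optimality argument, counting $|T|+\sum |g(u,v)|$ as a lower bound for any member of ${\mathcal C}_{T,g}$, is the natural one and is also left unstated in the paper.
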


Let us see how the algorithm works out for small values of $m$:

If $m = 0$ there is only one order, namely $\emptyset$.

If $m = 1$ with colour $r$, then there are two possible $T$-configurations, with $x_1 = y_1$ or $x_1 < y_1$. The former gives us just a singleton $r$ (since there is no interval of consecutive points into 
which new elements can be inserted), and the latter $rr$ which is a singleton $\equiv_2$-class, and $rrr$, which lies in the infinite $\equiv_2$-class $\{r^n: 3 \le n\}$ (where $g(x_1, y_1) = \{r\}$).

If $m = 2$, these are the possible $T$-configurations, with the corresponding singleton $\equiv_2$-classes given:

$x_1 < x_2 < y_2 < y_1$, $rbrb$, $rbbr$, $brbr$, $brrb$,

$x_1 < x_2 = y_2 < y_1$, $rbr$, $brb$,

$x_1 < y_2 < x_2 < y_1$, $rrbb$, $bbrr$,

$x_1 = y_2 < x_2 < y_1$, $rbb$, $brr$,

$x_1 < y_2 < x_2 = y_1$, $rrb$, $bbr$,

$x_1 = y_2 < x_2 = y_1$, $rb$, $br$.

Including the allowed insertions, where we write $r^k$ for an arbitrary sequence of $k$ $r$s ($k \ge 0$), similarly $b^l$, and $w(r,b)$ an arbitrary string of $r$s and $b$s, this gives rise to the following 
list

for $rbrb$: $rr^kbw(r,b)rb^l$, 16 $\equiv_2$-classes (two options for each of $k$ and $l$, and four for $w(r,b)$), similarly for $brbr$, $rbbr$ and $brrb$,

for $rbr$: $rr^kbr^lr$, 4 $\equiv_2$-classes, similarly for $brb$,

for $rrbb$: $rr^krbb^lb$, 4 $\equiv_2$-classes, similarly for $bbrr$,

for $rbb$: $rbb^lb$, 2 $\equiv_2$-classes, similarly for $brr$, 

$rrb$, $bbr$ are similar to $rbb$,

$rb$, just one $\equiv_2$-class, and $br$ is similar.

This gives a total of 90 $\equiv_2$-classes in which two colours appear. Note that the optimal representative of each class is unique, except when there is a `middle' section in which both colours appear. 
For instance, $rrbrbrbb \equiv_2 rrbbrrbb$, though each is of optimal length.

If $m = 3$, there are 26 possible $T$-configurations, of which all but four fulfil the stipulations of Lemma \ref{2.1} (the four which do not are given by $x_1 \le y_3 < y_2 < x_2 < x_3 \le y_1$). To list 
even these is quite laborious, and when their possible colourings are taken into account, as well as the possible insertions, it is seen that the list increases dramatically over the case $m = 2$. For 
instance, for the $T$-configuration $x_1 < x_2 < x_3 < y_3 < y_2 < y_1$ there are 36 ways of colouring the points, and for the $rbg$ colouring of $x_i$ and $y_i$ points, the $\equiv_2$-classes are of the 
forms $rr^{i_1}br^{i_2}b^{j_2}gr^{i_3}b^{j_3}g^{k_3}rb^{j_4}g^{k_4}bg^{k_5}g$ where the indices are all 0 or 1, giving $2^9$ possibilities, so even for this case there are $36 \times 2^9 = 18432$ 
$\equiv_2$-classes.

We remark that the easiest way to demonstrate that a finite string is optimal in its $\equiv_2$-class is to show that all its points have distinct 1-characters (then appeal to Theorem \ref{1.1}), and in fact 
this suffices for all 90 strings for $m = n = 2$, as one sees by inspection. The same holds for any number of colours (though not with greater values of $n$, as we see in the next section).

\begin{theorem} For any $m$, no $\equiv_2$-optimal $m$-coloured string realizes the same $1$-character more than once.  \label{2.5}
\end{theorem}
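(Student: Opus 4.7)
The plan is to argue by contradiction. Assume $A$ is $\equiv_2$-optimal and contains distinct points $z<z'$ with the same $1$-character $\langle L, R\rangle_c$, so $L(z)=L(z')=L$, $R(z)=R(z')=R$, and $F(z)=F(z')=c$. The goal is to contradict this by exhibiting a discrepancy in $L$, in $R$, or in the colour. The three facts I would lean on, all flowing from the preceding material, are: (i) by Theorem~\ref{2.2} together with optimality, in each open interval $(t_i,t_{i+1})$ between consecutive members of the $T$-configuration of $A$ every colour appears at most once; (ii) by the defining minimality/maximality of the $T$-configuration points, $F(x_\ell)\notin L(x_\ell)$ at each $x$-point and $F(y_\ell)\notin R(y_\ell)$ at each $y$-point; (iii) consequently $L$ strictly grows precisely at the $x$-points (where it gains $F(x_\ell)$) and $R$ strictly shrinks precisely at the $y$-points (losing $F(y_\ell)$).

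Next I would split into cases according to whether a $T$-configuration point separates $z$ from $z'$. Suppose some $x$-point $x_\ell$ lies in $(z,z']$. If $x_\ell=z'$ then $F(z)=F(x_\ell)$ together with $z<x_\ell$ places $F(x_\ell)$ into $L(x_\ell)$, contradicting (ii); if instead $x_\ell<z'$ then (iii) gives $F(x_\ell)\in L(z')\setminus L(z)$, contradicting $L(z)=L(z')$. The mirror argument, with $R$ in place of $L$ and $[z,z')$ in place of $(z,z']$, rules out a $y$-point $y_\ell$ in $[z,z')$.

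In the residual case neither separating $T$-point exists; it follows that $z$ is not a $y$-point, $z'$ is not an $x$-point, and no $T$-configuration point lies strictly between $z$ and $z'$. So $z$ and $z'$ share a single gap of the $T$-configuration, giving four subcases: (C1) $z,z'$ are both interior to $(t_i,t_{i+1})$; (C2) $z=t_i$ is $x$-only and $z'\in(t_i,t_{i+1})$; (C3) $z\in(t_i,t_{i+1})$ and $z'=t_{i+1}$ is $y$-only; (C4) $z=t_i$, $z'=t_{i+1}$, with $z$ $x$-only and $z'$ $y$-only. Subcase (C1) is immediate from (i): the colours in $(t_i,t_{i+1})$ are distinct, contradicting $F(z)=F(z')$. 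Subcases (C2) and (C4) follow from (ii): since $z=x_\ell<z'$ and $F(z')=F(x_\ell)$, we get $F(x_\ell)\in L(z')\setminus L(x_\ell)$. Subcase (C3) is the symmetric argument using $R$.

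The main obstacle will be the bookkeeping in this last case: the asymmetry between the half-open sets $(z,z']$ (used for $x$-point searches) and $[z,z')$ (used for $y$-point searches) has to be handled carefully, and one must check that the four subconfigurations really exhaust the possibilities once both separating searches fail. With that setup done, each subcase collapses to a one-line appeal to (i) or (ii).
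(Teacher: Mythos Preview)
Your proof is correct, but it follows a genuinely different route from the paper's. The paper argues directly in the style of the Cutting Lemma: given $a<b$ with the same $1$-character, it deletes $b$ and checks that $A' = A\setminus\{b\}$ realizes exactly the same set of (coloured) $1$-characters as $A$; Theorem~\ref{1.1} then gives $A\equiv_2 A'$, contradicting optimality. The only work is to see that for $x\neq b$ the $1$-character of $x$ is unchanged by the deletion --- e.g.\ if $x<b$ and $F(b)$ occurs in $A^{>x}$, then from $A^{>a}\equiv_1 A^{>b}$ one finds a $F(b)$-coloured point beyond $b$, hence still in $A'^{>x}$. This uses nothing beyond Theorem~\ref{1.1}.

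Your argument instead invokes the classification (Theorem~\ref{2.2}) together with the reduction algorithm behind Theorem~\ref{2.4} to establish fact~(i), and then locates the repeated pair $z,z'$ relative to the $T$-configuration, deriving a contradiction in each case via (i) or (ii). This is heavier machinery but valid, and it has the merit of explaining \emph{where} a repetition would be forced to live (inside a single gap, producing a forbidden colour repeat there), whereas the paper's deletion argument is uniform and blind to the $T$-structure. The paper's approach is shorter and self-contained; yours makes the structural picture explicit at the cost of depending on the preceding classification results.
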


\begin{proof} Suppose on the contrary that $(A, <, F)$ is $\equiv_2$-optimal but $a < b$ realize the same 1-character (and have the same colour). We show that $A \equiv_2 A \setminus \{b\}$, contradicting 
optimality of $A$. This is similar to the Cutting Lemma, Lemma \ref{1.2}. We just need to show that $A$ and $A' = A \setminus \{b\}$ realize the same 1-characters (with colours). For this 
we note that if $x \neq b$, then $x$ realizes the same 1-character in $A$ and $A'$, and if $x = b$ then $x$ realizes the same 1-character in $A$ as $a$ does in $A'$. In each case, the colours of $x$ and its 
replacement are equal, so as 1-characters are entirely determined by the sets of colours occurring on left and right, we just need to look at the colours occurring in $A^{< x}$, $A'^{< x}$, $A^{> x}$, 
$A'^{> x}$, and in the second case, $A^{< b}$, $A'^{< a}$, $A^{> b}$, $A'^{> a}$.

If $x < b$ then $A^{< x} = A'^{< x}$, and the colours in $A^{> x}$ and $A'^{> x}$ could only possibly differ on $F(b)$, but as $A^{> a} \equiv_1 A^{> b}$, and $b \in A^{> a}$, there is a point $> a$ 
coloured $F(b)$, and therefore also a point $> b$ (and hence $> x$) coloured $F(b)$. A similar argument applies if $x > b$ (using $A^{< a} \equiv_1 A^{< b}$). Finally, if $x = b$ then we can see that 
$A^{< b} \equiv_1 A^{< a} = A'^{< a}$ and $A^{> b} \equiv_1 A^{> a} \equiv_1 A'^{> a}$ (since these last two exhibit the same colours).   \end{proof}

\section{3-equivalence classes}

To help analyze the behaviour of strings up to 3-equivalence, we introduce various labelled directed graphs to keep track of the transitions between 2-characters as we pass through the string. The basic 
idea is that if $a_1a_2a_3 \ldots a_k$ is a string over an alphabet of $m$ colours, then a node of the digraph will be taken to be a 2-character of the form $\langle x, y \rangle_c$ and we include a
directed edge from $\langle x_1, y_1 \rangle_{c_1}$ to $\langle x_2, y_2 \rangle_{c_2}$ provided that for some strings $u_1, v_1, u_2, v_2$, $x_1 = [u_1]_2$, $x_2 = [u_2]_2$, $y_1 = [v_1]_2$, $y_2 = [v_2]_2$
and $u_2 = u_1c_1$, $v_1 = c_2v_2$. This corresponds to the fact that the string $a_1a_2a_3 \ldots a_k$ gives rise to a path 

\vspace{.1in}

$\langle[\emptyset]_2, [a_2\ldots a_k]_2\rangle_{F(a_1)} \longrightarrow \langle[a_1]_2, [a_3\ldots a_k]_2\rangle_{F(a_2)} \longrightarrow $ 

$\langle[a_1a_2]_2, [a_4\ldots a_k]_2\rangle_{F(a_3)} \longrightarrow \ldots  \longrightarrow \langle[a_1a_2a_3 \ldots a_{k-1}]_2, [\emptyset]_2\rangle_{F(a_k)}.$

\vspace{.1in}

In practice, retaining all of both co-ordinates is too cumbersome, and we use an abbreviated string which at any rate for points in the `middle', suffices to describe the 2-character. The object here is to 
obtain some lower bounds on $g(m, 3)$ in the notation of \cite{mwesigye2}, by producing as long optimal strings as possible. The easiest way in which optimality can be assured is to arrange that all points 
have distinct 2-characters. That this is not {\em necessary} for optimality is later remarked (by contrast with Theorem \ref{2.5} for $\equiv_2$).

To illustrate this, we show how for $m = 2$ we can construct an optimal string $A$ of length 70. Certain features of this seem to depend heavily on the specifics of this case, and we are unsure of how to 
generalize. The string is composed of three sections, $L, M$, and $R$ (of lengths 19, 32, and 19) with $L < M < R$. The idea is that, by the time we get to the middle section $M$, the 2-character has 
sufficiently `settled down', to enable us to handle substrings rather uniformly. To describe what $L$, $M$, and $R$ are in this case, we emphasize that this subdivision applies just to the 3-move case. The
subdivision used in section 2 for the 2-move case is also used, but on the left and right subsets, where inductively, and using Theorem \ref{1.1} we need to look at 2-characters. To avoid confusion, we
use $L$, $M$, and $R$ to stand only for the subdivision of the whole string, and if we need to refer to the subdivision of a left or right segment, we use the terms `left', `middle', and `right'. The 
definition here is that $M$ comprises all those points whose left and right 2-characters both themselves have non-empty middle sections. Since it is clear that $M$ so defined is convex, we can then let
$L$ and $R$ be the subsets of its complement which are to its left, right respectively.

Since we shall take $L = rrrrrrbbbbbbrbbbbbr$, the discussion given in the previous section shows that for any $a$ in $M$, 
$[A^{<a}]_2$ begins $rrb$, and it ends with $rb$, $rbb$, $br$, or $brr$ (since we must have $x_1 < x_2 < y_2 < y_1$), and as $(x_2, y_2)$ contains points of both colours, the middle may be taken as $rb$. This means that we can 
essentially describe the left 2-character of a point $a$ in $M$ by the ending of $A^{<a}$ (and its colour). Although the ending will actually have length 2 or 3, we can tell what it is just from its 
last two points. Taking $[A^{>a}]_2$ into account in a similar way, a point is entirely characterized by just 5 entries, two on the left, two on the right, and the colour of $a$ in the middle. The 
following general lemma is invoked here just for $m = 2$ and $k = 5$, but may be more widely applicable.  

\begin{lemma} If $m, k$ are integers $\ge 2$, then there is a string of length $m^k$ with entries in $\{0, 1, \ldots, m-1\}$ such that every string of length $k$ in $\{0, 1, \ldots, m-1\}$ arises exactly 
once as a substring of $k$ consecutive entries (counting cyclically).    \label{3.1}
\end{lemma}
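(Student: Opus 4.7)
The statement is the classical existence result for de Bruijn sequences, and I would prove it by the standard Eulerian-circuit argument on the de Bruijn graph.

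The plan is to build a directed graph $G = G_{m,k}$ whose vertex set consists of all $m^{k-1}$ words of length $k-1$ over $\{0, 1, \ldots, m-1\}$, and whose edges are all $m^k$ words of length $k$: a word $a_1a_2\cdots a_k$ is the edge from the vertex $a_1a_2\cdots a_{k-1}$ to the vertex $a_2a_3\cdots a_k$. An Eulerian circuit in $G$ will visit each of the $m^k$ edges exactly once, and reading (say) the final symbol of each successive edge label produces a cyclic string $c_1c_2\cdots c_{m^k}$ of the required length in which every length-$k$ word appears exactly once as a block of $k$ consecutive entries read cyclically. So the whole task reduces to exhibiting an Eulerian circuit in $G$.

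To apply the standard Euler criterion for directed graphs, I would check the two required properties. First, $G$ is \emph{balanced}: every vertex $a_1\cdots a_{k-1}$ has exactly $m$ outgoing edges (one for each choice of a symbol to append) and exactly $m$ incoming edges (one for each choice of a symbol to prepend), so in-degree equals out-degree at every vertex. Second, $G$ is \emph{strongly connected}: given any two vertices $u = u_1\cdots u_{k-1}$ and $v = v_1\cdots v_{k-1}$, I can reach $v$ from $u$ by traversing, in turn, the edges whose labels are the successive length-$k$ windows of the concatenated word $u_1u_2\cdots u_{k-1}v_1v_2\cdots v_{k-1}$; after $k-1$ such steps the current vertex is $v$. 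A balanced strongly connected directed graph admits an Eulerian circuit (Euler--Hierholzer), which completes the proof.

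The argument is essentially routine; the only step that requires any care is the strong-connectedness check, and it is handled cleanly by the ``sliding-window'' construction above. Once the Eulerian circuit is produced, the translation between edges of $G$ and length-$k$ substrings of the resulting cyclic sequence is automatic, since consecutive edges of the circuit share a vertex, which forces the last $k-1$ letters of one edge-label to coincide with the first $k-1$ letters of the next, so reading one new letter per edge yields a well-defined cyclic string of length $m^k$.
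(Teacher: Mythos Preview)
Your proof is correct and follows essentially the same route as the paper: both construct the de Bruijn digraph on words of length $k-1$, verify that every vertex has in-degree and out-degree $m$, check strong connectedness by the sliding-window path, and then invoke the existence of an Eulerian circuit. The paper even credits this argument to P.~J.~Cameron.
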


\begin{proof} This method, using an eulerian circuit, was pointed out by P J Cameron.

We form a digraph having as vertices all strings over $\{0, 1, \ldots, m-1\}$ of length $k-1$, and including a directed edge from $(x_1, x_2, \ldots, x_{k-1})$ to $(y_1, y_2, \ldots, y_{k-1})$ provided that 
$(x_2, x_3, \ldots, x_{k-1}) = (y_1, y_2, \ldots, y_{k-2})$. Then the directed edges starting at $(x_1, x_2, \ldots, x_{k-1})$ are those of the form $(x_1, x_2, \ldots, x_{k-1}, y)$ so there are exactly
$m$ of them, and similarly the in-degree of each vertex is also $m$. Furthermore, the digraph is strongly connected, since there is a path from $(x_1, x_2, \ldots, x_{k-1})$ to $(y_1, y_2, \ldots, y_{k-1})$
passing by way of $(x_2, x_3, \ldots, x_{k-1}, y_1)$, $(x_3, x_4, \ldots, x_{k-1}, y_1, y_2)$, $\ldots$, $(x_{k-1}, y_1, y_2, \ldots, y_{k-2})$. Hence the digraph has an eulerian circuit, and this provides 
the desired string of length $m^k$.
\end{proof}

\vspace{.1in}

Given this lemma, we can form a binary string of length 32, such that cyclically ordered, every 5-element string arises exactly once, and this may be taken explicitly as
$$M = rbrbrrbbbrbrbbrbbbbbrrrrrbrrrbbr.$$
To form our sequence of length 70, we precede $M$ by $L = rrrrrrbbbbbbrbbbbbr$ and succeed it by $R = rbbbbbrbbbbbbrrrrrr$ (which is $L$ in reverse, easing some verifications). Let us write this string as $a_1a_2a_3 \ldots a_{70}$. We verify that all 70 points have distinct 2-characters.

First we can see that for every point $a$ of $M \cup R$, in $A^{<a}$, $x_1 = a_1$, $x_2 = a_7$, $y_1 = a_i$ and $y_2 = a_j$ where $i \ge 19$ and $i > j \ge 18$, so its left 2-character begins $rrbrb$, and
ends $rb$, $rbb$, $br$, or $brr$. However, if $a \in L$, $A^{<a}$ has the form $r^ib^jrb^k$, $r^ib^jr$, $r^ib^j$, or $r^i$ for some $i, j, k$, so its left 2-character is {\em not} of this form. By symmetry, 
we can see that no point of $L \cup M$ shares a right 2-character with a point of $R$. We now treat each of $L$ and $M$ individually (and $R$ is similar to $L$).

For $L$, the left 2-characters of the 19 points are $\emptyset$, $r$, $rr$, $rrr$, $rrr$, $rrr$, $rrr$, $rrrb$, $rrrbb$, $rrrbbb$, $rrrbbb$, $rrrbbb$, $rrrbbb$, $rrbbbr$, $rrbbrb$, $rrbbrbb$, $rrbbrbb$, 
$rrbbrbb$, $rrbbrbb$ respectively. The fourth point for each of the repeated left 2-characters has a different colour from the others, and the three remaining points are distinguished by their right 
2-characters, which in each case are distinct members of $\{rrbrbbrr$, $rbrbbrr$, $bbrrbbrr$, $brrbbrr\}$.

Finally, we can see that all points of $M$ have distinct 2-characters since they are midpoints of distinct 5-element strings---notice that we have arranged things so that 
$a_{18}a_{19}a_{20}a_{21} = a_{50}a_{51}a_{52}a_{53}$, which means that distinctness of the 5-element strings persists even at the `ends'.

This shows that $g(2,3)$, which is defined in \cite{mwesigye2} to be the maximum of the lengths of optimal representatives of finite 2-coloured strings under $\equiv_3$ is at least 70. The upper bound given 
in \cite{mwesigye2} is clearly absurdly high, but even so, 70 is a big increase on the optimal length for $m = n = 2$ which is 8. 

Let us see that this is the best we can do by these methods, in which optimality is guaranteed by distinctness of the 2-characters. We can always subdivide a given 2-coloured finite linear order into 3 
sections, $L$, $M$, and $R$, where in $M$, both left and right 2-characters have $rb$ as `middle'. Clearly $M$ is convex, so we may take $L$ and $R$ to be the sets of points to the left, right of $M$ 
respectively. If $a \in M$, then the left and right 2-characters of $a$ must have at least 6 entries, and as in the discussion above, the 2-characters of the points of $M$ are entirely determined by the 
5-element strings of which they are mid-points. Hence $|M| \le 32$. 

Now consider what $L$ can be. Without loss of generality, suppose it begins with $r$. If it has an initial segment of the form $r^ib^jr^kb^lr^pb^q$ with positive exponents, then the next point does not lie 
in $L$, and similarly, $q = 1$ (since otherwise the final point does not lie in $L$), and by similar arguments, $j = l = 1$, giving $L = r^ibr^kbr^pb$. If $i \ge 7$ then the fourth and fifth points of $A$ 
realize the same 2-characters, contrary to assumption. Hence $i \le 6$. Similarly, $k, p \le 6$. If $k = 6$, then 2-characters are repeated for the middle two entries in that block. Hence $k \le 5$, and 
similarly $p \le 5$. Hence $|L| \le 6 + 5 + 5 + 3 = 19$ (and one can check that $rrrrrrbrrrrrbrrrrrb$ is possible).

If $L = r^ib^jr^kb^lr^p$ it again follows that $j = l = 1$, $i \le 6$, $k, p \le 5$, so $|L| \le 18$.

If $L = r^ib^jr^kb^l$ then $j = 1$ or $k = 1$, and again, $i \le 6$, $l \le 5$, and also $j, k \le 5$, so $|L| \le 6 + 1 + 5 + 5 = 17$.

If $L = r^ib^jr^k$ then $i \le 6$, $j \le 7$, $k \le 6$ so $|L| \le 19$.

If $L = r^ib^j$ then $|L| \le 14$ and if $L = r^i$ then $|L| \le 7$. 

It follows similarly that $|R| \le 19$, and hence $|A| \le 19 + 32 + 19 = 70$.

Finally we remark that in $\equiv_3$-optimal strings, 2-characters may be repeated, and using this we are able to construct a longer $\equiv_3$-optimal 2-coloured string. We first give a small example. 
Consider $A = rbrbrbrbrbrbrbr$, which has length 15, is a palindrome (reading the same forwards and backwards), and whose 7th and 9th entries realize that same 2-character (though apart from this, all 
2-characters are distinct). To see that $A$ is $\equiv_3$-optimal, suppose that $B \equiv_3 A$, and we show that $B$ has length at least 15. S ince $A$ realizes 14 2-characters, so does $B$, and hence it 
has length at least 14. Now $A$ realizes the 2-character $\langle rbrb, brrbbr \rangle_r$, so $B$ must realize this as well, and as $rbrb$ lies in a singleton $\equiv_2$-class, $B$ begins $rbrbr$. 
Similarly, $B$ realizes $\langle rbrbr, rbrbbr \rangle_b$, so as the $\equiv_2$-class of $rbrbr$ is $\{rbr^pbr: p \ge 1\}$, $B$ begins with $rbr^pbrb$ for some $p \ge 1$. Since $B$ begins with $rbrbr$ it 
follows that $p = 1$, and that $B$ begins $rbrbrb$. Similarly $B$ ends with $brbrbr$. The other two 2-characters realized by $B$ are $\chi_1 = \langle rbrbrb, brrbbr \rangle_r$ and 
$\chi_2 = \langle rbrbbr, rbrbbr \rangle_b$. Since $B^{<b_7} = rbrbrb$, $b_7$ must realize $\chi_1$, and so $b_7 = r$. Similarly, the 7th point from the right realizes $\chi_1$ and is $r$. Since 
$|B| \ge 14$, these two points are distinct, and as $B$ also realizes $\chi_2$, there must be another point between them, so $B$ has length at least 15.

We now present a 3-optimal string of length 74 having the same $L$ and $R$ as in the example given of length 70, but with longer $M$:
$$A = rrrrrrbbbbbbrbbbbbr|rbrbbbbbrbbrrbrbrrrrrbrrrbbrbrrrbbbr|rbbbbbrbbbbbbrrrrrr.$$
The subdivision into $L$, $M$, $R$ is indicated. By the previous discussion, this must have repeated 2-characters in $M$. We let $L' = br$ (the end of $L$) and $R' = rb$ (the beginning of $R$). To verify 
that $A$ is $\equiv_3$-optimal we first note that by the previous arguments, the 2-characters of all elements of $L \cup R$ are uniquely determined and different from all those occurring in $M$, so any 
3-equivalent string must begin and end in this way. Now $A$ determines a path through the digraph $D$ having as vertices strings of length 4 over $\{r, b\}$ arising as convex subsets of $L' \cup M \cup R'$,
and the edges of $D$ tell us precisely which 2-characters are realized in the path. So although some will now be repeated, we have to check that no shorter path through $D$ can realize precisely the same 
2-characters. One checks that $D$ has 28 edges. Suppose therefore that $P$ is a path through $D$ traversing precisely the same edges (though not necessarily the same number of times). We shall show that
$P$ has length at least 36. We may also view $P$ as a `multi-digraph' in which the multiplicities with which the edges of $D$ arise in $P$ are also recognized, and in this sense we may talk of the 
`in-degree' $in(x)$ and `out-degree' $out(x)$ of a vertex $x$. By the usual theory of eulerian paths, if $i$ and $f$ are the initial and final vertices of $P$, then $i \neq f$ implies that 
$out(i) = in(i) + 1$ and $in(f) = out(f) + 1$; all other vertices $x$, and also $i$ and $f$ if they are equal, satisfy $in(x) = out(x)$. Furthermore, since $L' = br$, $i$ must equal $brrr$, $brrb$, $brbr$, 
or $brbb$, and similarly $f$ must equal $rrrb$, $rbrb$, $brrb$, or $bbrb$.The digraph $D$ is shown in figure 1.

\begin{figure}
\setlength{\unitlength}{1mm}
\begin{picture}(00,00)(65,125)

\curve(6,15,3,17,1.5,13)
\curve(6,11,3,9,1.5,13)
\put(1.5,13.2){\vector(0,-1){1}}

\put(10,13){\makebox(0,0)[cc]{$rrrr$}}
\put(14,13){\vector(3,1){6}}

\put(24,9){\makebox(0,0)[cc]{$brrr$}}
\put(20,9){\vector(-3,1){6}}
\put(24,10.5){\vector(0,1){4}}

\put(24,17){\makebox(0,0)[cc]{$rrrb$}}
\put(28,18){\vector(3,1){6}}
\curve(28.2,16.4,38,15,47.8,13.6)
\put(47.45,13.68){\vector(4,-1){1}}

\put(38,5){\makebox(0,0)[cc]{$rbrr$}}
\put(34,5){\vector(-3,1){7}}

\put(38,21){\makebox(0,0)[cc]{$rrbr$}}
\put(38,18.5){\vector(0,-1){11.5}}
\curve(42.2,20.4,52,19,61.8,17.6)
\put(61.1,17.72){\vector(4,-1){1}}

\put(52,13){\makebox(0,0)[cc]{$rrbb$}}
\curve(56.2,13.3,80,15,103.8,16.7)
\put(103.1,16.65){\vector(1,0){1}}
\put(54.3,11){\vector(1,-1){8.5}}

\put(66,1){\makebox(0,0)[cc]{$rbbr$}}
\curve(70.2,1.6,80,3,89.8,4.4)
\put(89.8,4.45){\vector(1,0){1}}
\put(68,2.5){\vector(1,1){8.5}}

\put(66,9){\makebox(0,0)[cc]{$brbr$}}
\curve(41.85,5.55,52,7,62.15,8.45)
\put(42.55,5.67){\vector(-4,-1){1}}

\put(66,17){\makebox(0,0)[cc]{$rbrb$}}
\curve(70.2,17.6,80,19,89.8,20.4)
\put(89.1,20.33){\vector(1,0){1}}
\put(66,15){\vector(0,-1){4}}

\put(66,25){\makebox(0,0)[cc]{$brrb$}}
\curve(42.2,21.6,52,23,61.8,24.4)
\put(42.55,21.65){\vector(-4,-1){1}}

\put(80,13){\makebox(0,0)[cc]{$bbrr$}}
\put(78,14.5){\vector(-1,1){8.3}}

\put(94,5){\makebox(0,0)[cc]{$bbrb$}}
\put(94,7.5){\vector(0,1){10.5}}
\curve(70.2,8.4,80,7,89.8,5.6)
\put(70.55,8.35){\vector(-1,0){1}}

\put(94,21){\makebox(0,0)[cc]{$brbb$}}
\curve(93,18,82,10,69,2)
\put(69.7,2.5){\vector(-4,-3){1}}
\put(98,20){\vector(3,-1){7}}

\put(108,9){\makebox(0,0)[cc]{$bbbr$}}
\put(104,8.1){\vector(-3,-1){7}}
\curve(84.2,12.4,94,11,103.8,9.6)
\put(84.85,12.6){\vector(-1,0){1}}

\put(108,17){\makebox(0,0)[cc]{$rbbb$}}
\put(108,15){\vector(0,-1){4}}
\put(112,16){\vector(3,-1){7}}

\put(122,13){\makebox(0,0)[cc]{$bbbb$}}
\put(118,12){\vector(-3,-1){6.8}}

\curve(126,15,129,17,131,13)
\curve(126,11,129,9,131,13)
\put(131,13.2){\vector(0,-1){1}}

\end{picture}
\end{figure}

\vspace{1.5in}
\qquad\qquad\qquad\qquad\qquad\qquad\qquad\qquad Figure: 1

\vspace{.2in}

Now we note that the vertex $rrbb$ is therefore an internal vertex of $P$, so has equal in- and out-degrees in $P$. Since its out-neighbours (in $P$) $rbbr$ and $rbbb$ are distinct, it follows that its 
in-degree is at least 2. Similarly, $in(rbrb)$, $out(rbrr)$, $out(brbr)$, $out(bbrr) \ge 2$. In $D$, each of $rrbb$ and $rbrb$ has only one in-neighbour, so the corresponding edges in $P$ must each appear 
at least twice. Similarly for the out-neighbours of $rbrr$, $brbr$, and $bbrr$. This already assures us that $P$ has length at least 33. But now we know that $in(rbrr) \ge 3$, and as $rbrr$ is internal,
also $out(rbrr) \ge 3$. Similarly, $in(rrbr)$, $in(rrrb) \ge 3$. Since the extra edges thereby assured and contributing to $out(rbrr)$, $in(rrbr)$, and $in(rrrb)$ must be $rbrr \to brrr$, 
$rrrb \to rrbr$ or $brrb \to rrbr$, and $rrrr \to rrrb$ or $brrr \to rrrb$, this gives at least 3 extra edges in $P$, showing that it has length at least 36, as desired.

\vspace{.1in}

\noindent{\bf Future work}

We have really only scratched the surface of this topic, in \cite{mwesigye2} and here, and a great deal more effort would be required to understand fully the structure of $\equiv_n$-optimal strings for all 
$n$, and for all colour set sizes. The method in the final example just given seems very laborious, merely to increase the length by 4, and that is only for two colours and $n = 3$. Undoubtedly there will 
be longer examples, requiring more careful checking. We remark that we also believe that one can construct finite strings having no $\equiv_3$-equivalent optimal substring. The idea would be to find a 
string as above obtained by modifying the length 70 example, but such that this time, the path is {\em not} optimal, but that any optimal path traversing the same edges as $D$ would have to have them in a 
different order, so that the optimal string wouldn't actually be a substring of the original one. 

We conclude by illustrating the specific problem which applies even to increase the number of colours by 1. We would like to apply the same kind of analysis as for the case of 2 colours, which relied on the 
subdivision of the string $A$ into $L$, $M$, $R$. We can still do this, and for long enough strings there will be a non-trivial $M$, comprising those points such that the left and right 2-characters both 
themselves have a `middle' in which all 3 colours, red, blue, and green, appear. Since this time 2-characters have length up to 15 (see \cite{mwesigye2}) the lengths of $L$ and $R$ will usually be a lot 
longer. The main problem comes about in $M$ however. Last time we were able to pin down the 2-character of a point in $M$ by a sequence of length 5. This time though, the right end of the left 2-character 
may be $rbg$, $rbgg$, $rbbgg$, $rbgbg$, $rbggb$, as well as others obtained by permuting the three colours, and we seem to need the final 4 entries at least to tell which is which, and so we'd have to have 
sequences of length 7, 8, or 9, in place of 5, that is, not constant. Thus the trick of using an Euler circuit doesn't seem to apply.


\end{document}